\title[Galois lines for the quotient curve of the Hermitian curve]{Galois lines for the quotient curve of the Hermitian curve by an involution}
\author{Satoru Fukasawa}
\subjclass[2020]{14H50, 14H37}
\keywords{Galois line, Galois point, Hermitian curve, quotient curve, automorphism group}
\thanks{The author was partially supported by JSPS KAKENHI Grant Number JP19K03438}
\address{Faculty of Science, Yamagata University, Kojirakawa-machi 1-4-12, Yamagata 990-8560, Japan}
\email{s.fukasawa@sci.kj.yamagata-u.ac.jp}
\newtheorem*{theorem}{Theorem}
\newtheorem*{proposition}{Proposition}
\newtheorem*{lemma}{Lemma}
\newtheorem{corollary}{Corollary}
\newtheorem{fact}{Fact}
\theoremstyle{definition}
\begin{document}
\begin{abstract}
The arrangement of all Galois lines for the quotient curve of the Hermitian curve by an involution in the projective $3$-space is described, in terms of the geometry over finite fields. 
All Galois points for three plane models of this curve admitting three or more Galois points are also determined.  
\end{abstract}
\maketitle

\section{Introduction} 
Let $k$ be an algebraically closed field of characteristic $p \ge 0$ and let $C \subset \mathbb{P}^2$ be an irreducible plane curve of degree $d>1$. 
For a point $P \in \mathbb{P}^2$ defined by linearly independent homogeneous polynomials $F_0, F_1$ of degree one, the rational map 
$$\pi_P: C \dashrightarrow \mathbb{P}^1; \ Q \mapsto (F_0(Q):F_1(Q))$$
is called the projection from $P$. 
The point $P$ is called a Galois point for $C$ if the function field extension $k(C)/\pi_P^*k(\mathbb{P}^1)$ induced by $\pi_P$ is Galois. 
This notion was introduced by Hisao Yoshihara in 1996 (\cite{fukasawa1, miura-yoshihara, yoshihara}). 
In this case, the Galois group is denoted by $G_P$. 
Analogously, the notion of a {\it Galois line} was introduced by Yoshihara.  
A line $\ell \subset \mathbb{P}^3$ defined by linear homogeneous polynomials  $F_0$ and $F_1$ is called a Galois line for a space curve $X \subset \mathbb{P}^3$ if the extension $k(X)/\pi_{\ell}^*k(\mathbb{P}^1)$ induced by the projection 
$$\pi_{\ell}: X \dashrightarrow \mathbb{P}^1; \ P \mapsto (F_0(P): F_1(P))$$ 
from $\ell$ is Galois (\cite{duyaguit-yoshihara, yoshihara2}). 
The Galois group is denoted by $G_\ell$. 
There are only three cases where {\it all} Galois lines for a curve in $\mathbb{P}^3$ of genus $g >1$ are determined (under the assumption that the automorphism group is non-trivial); the Giulietti--Korchm\'{a}ros curve \cite{fukasawa-higashine}, the Artin--Schreier--Mumford curve \cite{fukasawa2}, or the generalized Artin--Schreier--Mumford curve \cite{fukasawa3}. 

In this article, we consider the plane curve $C \subset \mathbb{P}^2$ defined by 
$$ y^{\frac{q+1}{2}}=x^q-x, $$
where the characteristic $p$ is odd and $q \ge 5$ is a power of $p$. 
The smooth model of $C$ is denoted by $X$. 
This curve is well studied (see \cite{stichtenoth1}, \cite[Section 12.1]{hkt}), and is an important maximal curve over $\mathbb{F}_{q^2}$ (see \cite{fgt}, \cite[Theorem 10.41]{hkt}). 
The system of homogeneous coordinates on $\mathbb{P}^3$ is denoted by $(X:Y:Z:W)$. 
It is known that the morphism 
$$ \varphi: X \rightarrow \mathbb{P}^3; \ (x: y : 1: x^2)$$
is an embedding, and that the automorphism group ${\rm Aut}(X)$ acts on the set $\mathcal{C}(\mathbb{F}_q)$ of all $\mathbb{F}_q$-rational points of the conic $\mathcal{C} \subset \mathbb{P}^3$ defined by $Y=X^2-Z W=0$ (see Section 2).   
The description of all Galois lines for the curve $\varphi(X) \subset \mathbb{P}^3$ was asked by Higashine \cite{higashine}. 
This article settles this problem, as follows. 

\begin{theorem} 
Let $\ell \subset \mathbb{P}^3$ be a line.  
Then $\ell$ is a Galois line for $\varphi(X)$ if and only if $\ell$ is in one of the following cases: 
\begin{itemize}
\item[(I)] $\ell \ni (0:1:0:0)$ and $\ell \cap \mathcal{C} \ne \emptyset$; 
\item[(II)] $\ell$ is an $\mathbb{F}_q$-line such that $\ell \ni (0:1:0:0)$ and $\ell \cap \mathcal{C}=\emptyset$;  
\item[(III)] $\ell$ is an $\mathbb{F}_q$-line contained in the plane defined by $Y=0$; 
\item[(IV)] $\ell$ is an $\mathbb{F}_{q^2}$-line such that $\ell=T_{R}\mathcal{C}$ for some $R \in \mathcal{C}(\mathbb{F}_{q^2})\setminus \mathcal{C}(\mathbb{F}_q)$, where $T_R\mathcal{C} \subset \{Y=0\}$ is the tangent line of the conic $\mathcal{C}$ at $R$. 
\end{itemize} 
Furthermore, for all cases, the Galois group $G_\ell$ is determined as follows.   
\begin{itemize} 
\item[(a)] In case (I), $G_\ell \cong C_{\frac{q+1}{2}}$, where $C_\frac{q+1}{2}$ is a cyclic group of order $\frac{q+1}{2}$. 
\item[(b)] In case (II), if there exist different points $R, R' \in \mathcal{C}(\mathbb{F}_q)$ such that $T_{R}\mathcal{C} \cap T_{R'}\mathcal{C}=\ell \cap \{Y=0\}$, then $G_\ell \cong C_{q+1}$. 
\item[(c)] In case (II), if there exists $R \in \mathcal{C}(\mathbb{F}_{q^2})\setminus \mathcal{C}(\mathbb{F}_q)$ such that $T_{R}\mathcal{C} \cap T_{R^q}\mathcal{C}=\ell \cap \{Y=0\}$, where $R^q$ is the image of $R$ under the $q$-Frobenius map, then $G_\ell \cong C_{\frac{q+1}{2}} \times C_2$. 
\item[(d)] In case (III), if $\ell=\overline{RR'}$ for some different points $R, R' \in \mathcal{C}(\mathbb{F}_q)$, then $G_\ell \cong D_{q-1}$, where $\overline{RR'}$ is a line passing through $R, R'$ and $D_{q-1}$ is a dihedral group of order $q-1$. 
\item[(e)] In case (III), if $\ell=T_R\mathcal{C}$ for some $R \in \mathcal{C}(\mathbb{F}_q)$, then $G_{\ell} \cong \mathbb{F}_q$. 
\item[(f)] In case (III), if $\ell=\overline{RR^q}$ for some $R \in \mathcal{C}(\mathbb{F}_{q^2})\setminus \mathcal{C}(\mathbb{F}_q)$, then $G_\ell \cong D_{q+1}$. 
\item[(g)] In case (IV), $G_\ell \cong C_{q+1}$.  
\end{itemize}
\end{theorem}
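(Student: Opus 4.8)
The plan is to reformulate the Galois condition via automorphisms. Because $\pi_\ell$ is a linear projection, a line $\ell=V(F_0,F_1)$ is Galois precisely when there is a subgroup $G\le\mathrm{Aut}(X)\subset\mathrm{PGL}_4(k)$ (via $\varphi$) with $|G|=\deg\pi_\ell$ acting trivially on the pencil of planes through $\ell$; then $G=G_\ell$. Triviality on the pencil means the $2$-dimensional space $\langle F_0,F_1\rangle$ of linear forms vanishing on $\ell$ is a common eigenspace of $G$ (scaled by a single character). From Section~2 I may assume that $\mathrm{Aut}(X)$ fixes $P_0=(0:1:0:0)$ and preserves $\{Y=0\}$, so $Y$ is a common eigenvector and $\langle X,Z,W\rangle$ is an invariant complement on which $\mathrm{Aut}(X)$ acts through $\mathrm{PGL}_2(q)$ (the action on $\mathcal{C}$) with kernel $N=C_{(q+1)/2}$ generated by $y\mapsto\zeta y$; here $\langle X,Z,W\rangle$ is the symmetric square of the standard module. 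A direct computation gives $\deg\varphi(X)=q+1$ and shows $\varphi(X)$ meets $\{Y=0\}$ transversally in $\mathcal{C}(\mathbb{F}_q)$, whence $\deg\pi_\ell=(q+1)-\#(\ell\cap\varphi(X))$ for lines in that plane. The eigenspace picture then splits the problem: either the eigenspace lies in $\langle X,Z,W\rangle$, i.e.\ $P_0\in\ell$ (cases (I),(II)); or it contains $Y$, i.e.\ $\ell\subset\{Y=0\}$ (cases (III),(IV)); or it is ``mixed'', a possibility I must exclude.

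For $\ell\ni P_0$ the projection factors as $\varphi(X)\to\mathcal{C}\to\mathbb{P}^1$, the first map of degree $(q+1)/2$ and the second the projection of the conic from the point $\ell\cap\{Y=0\}$. If this point lies on $\mathcal{C}$ (case (I)) the second map is birational, $\deg\pi_\ell=(q+1)/2$, the fibres are the level sets of $x$, and $G_\ell=N\cong C_{(q+1)/2}$, proving (a). If it is exterior and $\mathbb{F}_q$-rational (case (II)) the second map has degree $2$, $\deg\pi_\ell=q+1$, and $N\subseteq G_\ell$ since every plane through $\ell\ni P_0$ is $N$-invariant. The deck involution $\tau$ of the conic projection lifts because $\mathrm{Aut}(X)\twoheadrightarrow\mathrm{PGL}_2(q)$, so $\ell$ is Galois with $|G_\ell|=q+1$; as $N$ is central, $G_\ell$ is abelian and the only question is whether it is $C_{q+1}$ or $C_{(q+1)/2}\times C_2$. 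I would decide this by computing the order of a lift of $\tau$ through the cover $y^{(q+1)/2}=\prod_{\gamma\in\mathbb{F}_q}(x-\gamma)$: if the two tangent points are $\mathbb{F}_q$-rational (case (b)) the involution is split and admits a lift of order $q+1$ (a generator $c$ of $\mu_{q+1}$ has $c^{(q+1)/2}=-1$), giving $G_\ell\cong C_{q+1}$; if they form a conjugate pair (case (c)) the involution is non-split and Euler's criterion bounds the order of every lift, giving $G_\ell\cong C_{(q+1)/2}\times C_2$.

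For $\ell\subset\{Y=0\}$ one checks $G_\ell\cap N=1$, because among the planes through such a line only two are $N$-invariant, so $G_\ell$ embeds into $\mathrm{PGL}_2(q)$. Writing $\pi_\ell=(Y:L)$ with $L=L(X,Z,W)$, the invariant to be stabilised is $f_\ell=Y/L$, and $L|_{\varphi(X)}$ is $(x-\alpha)(x-\beta)$, $(x-r)^2$, or an irreducible quadratic according as $\ell$ is a secant, a tangent, or meets $\mathcal{C}$ in a conjugate pair; counting $\#(\ell\cap\mathcal{C}(\mathbb{F}_q))$ yields $\deg\pi_\ell=q-1,\,q,\,q+1$. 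Then $G_\ell$ is the subgroup of $\mathrm{PGL}_2(q)$ stabilising the corresponding points of $\mathcal{C}$ and lifting so as to fix $f_\ell$; tracking the lifting through the cover identifies these as the normaliser of a split torus ($D_{q-1}$, case (d)), a unipotent subgroup isomorphic to $(\mathbb{F}_q,+)$ (case (e)), the normaliser of a non-split torus ($D_{q+1}$, case (f)), and a non-split torus ($C_{q+1}$, case (g)). The $\mathbb{F}_{q^2}$-tangent lines of case (IV) are precisely the tangents at non-rational points, whose stabiliser is the non-split torus, so these give $C_{q+1}$ as in (g).

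Finally I must prove that these families are exhaustive and that no ``mixed'' eigenspace arises. For the latter, a common eigenspace meeting both $\langle Y\rangle$ and $\langle X,Z,W\rangle$ nontrivially forces the $Y$-character to occur inside the symmetric square $\langle X,Z,W\rangle$; I would then check that the resulting line has $\deg\pi_\ell\ne|G|$, so it fails to be Galois. The main obstacle is the exact determination of the isomorphism type of $G_\ell$ in cases (b)--(f). This amounts to controlling the extension $1\to N\to\mathrm{Aut}(X)\to\mathrm{PGL}_2(q)\to1$, i.e.\ deciding which elements of $\mathrm{PGL}_2(q)$ lift to automorphisms fixing $f_\ell$ and with what order. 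Every split/non-split and cyclic/product dichotomy reduces to whether a given element of $\mathbb{F}_q^\times$ is a square, so the delicate point is the arithmetic of the cover $y^{(q+1)/2}=x^q-x$ over $\mathbb{F}_q$ rather than any single analytic estimate.
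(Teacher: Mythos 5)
Your reformulation---Galois iff $\langle F_0,F_1\rangle$ is a common eigenspace for a subgroup of order $\deg\pi_\ell$---is sound, and the linear algebra behind it is even cleaner than you suggest: since every automorphism fixes $(0:1:0:0)$ and preserves $\{Y=0\}$, its matrix is automatically block diagonal with respect to $\langle Y\rangle\oplus\langle X,Z,W\rangle$, and your mixed-case exclusion completes in one line (an element of $PGL_2$ with eigenvalues $\lambda,\mu$ acts on the symmetric square with eigenvalues $\lambda^2,\lambda\mu,\mu^2$, so a two-dimensional common eigenspace meeting both summands forces $|G_\ell|\le 2$, while $\deg\pi_\ell\ge q-1\ge 4$). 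This is a genuinely different organization of the ``only if'' part: the paper instead argues geometrically via Fact~1 (transitivity on fibers, equality of ramification indices), the Weierstrass-point characterization of $\mathcal{C}(\mathbb{F}_q)$, and the concurrency of the tangent lines $T_R\varphi(X)$ at $(0:1:0:0)$, with no representation theory. Your trichotomy, once the mixed case is killed, recovers cases (I)--(IV) more systematically than the paper's case analysis.

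The genuine gaps are in the ``if'' direction and the group identifications, and they are exactly the steps you defer. First, surjectivity of $\mathrm{Aut}(X)\to PGL_2(\mathbb{F}_q)$ is asserted but never proved; the paper in effect establishes what is needed by writing down the explicit automorphisms $\sigma_c$, $\sigma$, $\tau$ and verifying they preserve $C$ and fix the projected function, and there is no shortcut here, because membership in $G_\ell$ requires more than lying in the image of the map to $PGL_2(\mathbb{F}_q)$: one needs a lift scaling $Y$ and the second linear form by the \emph{same} factor. Second, precisely because you postpone that lifting condition, your provisional identifications in (d) and (f) are wrong as stated: the stabilizer in $PGL_2(\mathbb{F}_q)$ of a rational point pair (resp.\ a conjugate pair) of $\mathcal{C}$ is the full normalizer of a torus, of order $2(q-1)$ (resp.\ $2(q+1)$), which is double $\deg\pi_\ell$ and contradicts your own degree count. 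Only an index-two dihedral subgroup lifts compatibly: in (d), a lift $(x,y)\mapsto(\zeta x,cy)$ fixes $y/x$ only if $c=\zeta$ and $\zeta^{(q+1)/2}=\zeta$, i.e.\ $\zeta^{(q-1)/2}=1$, so the image consists of the square scalings together with the inversions $x\mapsto\xi/x$ with $\xi^{(q-1)/2}=-1$ --- this is the paper's $f(\zeta x,t)=f(\xi/x,t)=0$ computation, and the analogous halving occurs in (f) via the conditions $e^{(q+1)/2}=1$, $f^{(q+1)/2}=-1$. The same square-class arithmetic you gesture at with ``Euler's criterion'' does settle (b) versus (c) (the paper's constraint $c^{(q+1)/2}=-(\alpha-\alpha^q)^2$, where $(\alpha-\alpha^q)^2$ is a nonsquare in $\mathbb{F}_q$, forces every lift of the involution to square into the squares of $N$, whereas in (b) one solves $c^{(q+1)/2}=-1$ by a generator of $\mu_{q+1}$), but as submitted, cases (b)--(f) are a program rather than a proof, and any execution of it converges of necessity to the explicit covering computations the paper carries out.
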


As an application of Theorem, the arrangements of Galois points for three plane models admitting three or more Galois points are determined. 
These are new examples of plane curves possessing three or more Galois points (\cite{yoshihara-fukasawa}).

\begin{corollary} \label{non-collinear outer Galois points} 
Let $P=(0:1:0)$. 
Then the set of all Galois points for the curve $y^{q+1}=x^q-2x^\frac{q+1}{2}+x$ coincides with the set 
$$ \{P\} \cup \{Y=0\}(\mathbb{F}_q). $$ 
Furthermore, the Galois group at any Galois point is determined as follows. 
\begin{itemize}
\item[(a)] $G_P \cong C_{q+1}$.  
\item[(b)] If $Q=(1:0:0)$ or $(0:0:1)$, then $G_Q \cong \mathbb{F}_q$. 
\item[(c)] If $Q=(\xi:0:1)$ for some $\xi \in \mathbb{F}_q$ with $\xi^{\frac{q-1}{2}}=1$, then $G_Q \cong D_{q-1}$.  
\item[(d)] If $Q=(\eta:0:1)$ for some $\eta \in \mathbb{F}_q$ with $\eta^{\frac{q-1}{2}}=-1$, then $G_Q \cong D_{q+1}$.   
\end{itemize}
\end{corollary}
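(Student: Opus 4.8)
The plan is to realize the plane curve in the Corollary as a projection of the space curve $\varphi(X)$ from a single point, and then to read off its Galois points directly from the Theorem, via the standard correspondence between the Galois points of a plane model and the Galois lines of the space model that produces it.

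First I would identify the plane model explicitly. Setting $u=x^2$ and $v=y$, the defining equation becomes $v^{q+1}=u^q-2u^{(q+1)/2}+u=(x^q-x)^2=(y^{(q+1)/2})^2=y^{q+1}$, so the given curve is a plane model of $X$; it is moreover birational to $X$, since $y^{(q+1)/2}=x^q-x=x(u^{(q-1)/2}-1)$ gives $x=v^{(q+1)/2}/(u^{(q-1)/2}-1)\in k(u,v)$, whence $k(u,v)=k(x,y)=k(X)$. In terms of $\varphi(X)=(x:y:1:x^2)=(X:Y:Z:W)$ one has $u=W/Z$ and $v=Y/Z$, so the model is $\varphi$ followed by the projection $(X:Y:Z:W)\mapsto(W:Y:Z)$ from the center $O=(1:0:0:0)$, the target coordinates $(x_m:y_m:z_m)$ being named so that $(x_m:y_m:z_m)=(W:Y:Z)$. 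Under this naming the point $P=(0:1:0)$ and the line $\{y_m=0\}$ correspond respectively to the line $\{Z=W=0\}$ and to the plane $\{Y=0\}$ of $\mathbb{P}^3$.

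Next I would invoke the elementary fact that projection from $O$ followed by projection from a point $Q\in\mathbb{P}^2$ coincides with projection from the line $\ell_Q=\langle O,\widetilde{Q}\rangle$ spanned by $O$ and a preimage $\widetilde{Q}$ of $Q$; hence $\pi_Q$ and $\pi_{\ell_Q}$ pull back the same subfield of $k(X)$, so $Q$ is a Galois point of the plane model if and only if $\ell_Q$ is a Galois line of $\varphi(X)$, and then $G_Q\cong G_{\ell_Q}$. As $Q\mapsto\ell_Q$ is a bijection from $\mathbb{P}^2$ onto the lines through $O$, the task reduces to determining, by the Theorem, all Galois lines through $O$. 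I would record at once that $O\in\{Y=0\}$, that $O\notin\mathcal{C}$ since $X^2-ZW=1\ne0$ at $O$, and that the unique line joining $O$ and $(0:1:0:0)$ is $\{Z=W=0\}$.

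Finally I would run through the four cases of the Theorem for lines through $O$. A line of type (I) or (II) passes through $(0:1:0:0)$, so the only candidate is $\ell=\{Z=W=0\}$; it misses $\mathcal{C}$ and is an $\mathbb{F}_q$-line, and since the tangents to $\mathcal{C}$ at $R=(0:0:1:0)$ and $R'=(0:0:0:1)$ meet exactly at $\ell\cap\{Y=0\}=O$, this falls under case (II)(b), giving $G_\ell\cong C_{q+1}$ and the Galois point $P=(0:1:0)$. A type (IV) line is tangent to $\mathcal{C}$ at a non-$\mathbb{F}_q$-rational point, but passage through $O$ forces the tangency point to have $X=0$, hence to be $R$ or $R'$, both $\mathbb{F}_q$-rational; so no type (IV) line passes through $O$. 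The remaining Galois lines through $O$ are the $\mathbb{F}_q$-lines in $\{Y=0\}$ through $O$ (type (III)), corresponding precisely to the $\mathbb{F}_q$-points of $\{y_m=0\}$; classifying each such line $\{Y=0,\,W=\lambda Z\}$ by its intersection $X^2=\lambda Z^2$ with $\mathcal{C}$ separates the two tangents $\{Z=0\}=T_{R'}\mathcal{C}$ and $\{W=0\}=T_{R}\mathcal{C}$ (the points $(1:0:0),(0:0:1)$, case (III)(e), $G\cong\mathbb{F}_q$) from the secants with $\lambda=\xi$ a nonzero square (the points $(\xi:0:1)$, case (III)(d), $G\cong D_{q-1}$) and those with $\lambda=\eta$ a nonsquare (the points $(\eta:0:1)$, case (III)(f), $G\cong D_{q+1}$). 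Collecting these yields exactly $\{P\}\cup\{Y=0\}(\mathbb{F}_q)$ with the asserted groups. The only genuinely non-routine step is the first one, namely spotting the substitution $u=x^2$ and verifying birationality; after that everything is bookkeeping with the Theorem, though I would take care to match the dichotomy $\xi^{(q-1)/2}=\pm1$ with the secant versus conjugate-pair split of the $\mathbb{F}_q$-lines through $O$.
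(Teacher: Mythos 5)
Your proposal is correct and follows essentially the same route as the paper: realize the curve $y^{q+1}=x^q-2x^{\frac{q+1}{2}}+x$ (via $t=x^2$) as the image of $\varphi(X)$ under projection from $(1:0:0:0)$, check birationality, and then classify the Galois lines through that center using the Theorem, matching each line to a plane point and its Galois group. Your case analysis (the unique type (II) line $\{Z=W=0\}$ giving $G_P\cong C_{q+1}$, the absence of type (IV) lines through the center, and the square/nonsquare split of the type (III) lines) simply makes explicit the bookkeeping that the paper's terse proof leaves to the reader.
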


\begin{corollary} \label{collinear inner Galois points} 
Let $P=(0:1:0)$ and let $Q=(1:0:0)$. 
The set of all Galois points for the curve $y^\frac{q+1}{2}=x^q-x$ coincides with the set 
$$ \{P, Q\} \cup \{(\alpha:0:1) \ | \ \alpha \in \mathbb{F}_q\}. $$
Furthermore, the Galois group at any Galois point is determined as follows. 
\begin{itemize}
\item[(a)] $G_P \cong C_\frac{q+1}{2}$.  
\item[(b)] $G_Q \cong \mathbb{F}_q$. 
\item[(c)] If $R=(\alpha:0:1)$ for some $\alpha \in \mathbb{F}_q$, then $G_R \cong D_{q-1}$. 
\end{itemize} 
\end{corollary}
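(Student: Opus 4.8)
The plan is to deduce the Corollary from the Theorem by exhibiting the plane curve $C\colon y^{\frac{q+1}{2}}=x^q-x$ as a linear projection of the space curve $\varphi(X)$. Put $\tilde{P}:=(0:0:0:1)$. The projection $\mathbb{P}^3 \dashrightarrow \mathbb{P}^2$ that drops the coordinate $W=x^2$ carries $\varphi(X)=\{(x:y:1:x^2)\}$ onto $C=\{(x:y:1)\}$, and since $\varphi$ is an embedding that recovers $x$ and $y$, this projection $\rho\colon \varphi(X)\to C$ from $\tilde P$ is birational. First I would record the resulting dictionary: a point $P'=(a:b:c)\in\mathbb{P}^2$ corresponds to the line $\ell=\overline{(a:b:c:0)\,\tilde P}$, and every line through $\tilde P$ arises exactly once in this way.

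The crux of the reduction is the function-field identity. Because $\tilde P=(0:0:0:1)$, any two linear forms cutting out $P'$ involve only $X,Y,Z$, hence define $\ell$ as well; thus $\pi_\ell=\pi_{P'}\circ\rho$, and the subfields $\pi_\ell^*k(\mathbb{P}^1)$ and $\pi_{P'}^*k(\mathbb{P}^1)$ coincide inside $k(\varphi(X))=k(C)=k(X)$. Consequently $P'$ is a Galois point for $C$ if and only if $\ell$ is a Galois line for $\varphi(X)$, and then $G_{P'}\cong G_\ell$. The Corollary is therefore equivalent to intersecting the list (I)--(IV) of the Theorem with the pencil of lines through $\tilde P$, and reading off the groups from parts (a)--(g).

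The key geometric input is that $\tilde P\in\mathcal{C}(\mathbb{F}_q)$, since $Y=0$ and $X^2-ZW=0$ there. Hence every line through $\tilde P$ already meets $\mathcal{C}$ (at $\tilde P$), which excludes case (II); moreover, as $\tilde P$ is $\mathbb{F}_q$-rational while the secant case (f) and the tangent case (IV) force $\ell\cap\mathcal{C}$ to consist of points in $\mathcal{C}(\mathbb{F}_{q^2})\setminus\mathcal{C}(\mathbb{F}_q)$, neither of these can contain $\tilde P$. What remains: in case (I) the unique line joining $(0:1:0:0)$ and $\tilde P$ is $\{X=Z=0\}$, which projects to $P=(0:1:0)$ with $G_P\cong C_{\frac{q+1}{2}}$; in subcase (e) the tangent $T_{\tilde P}\mathcal{C}=\{Y=Z=0\}$ projects to $Q=(1:0:0)$ with $G_Q\cong\mathbb{F}_q$; and in subcase (d) the lines $\overline{\tilde P\,R'}$ with $R'\in\mathcal{C}(\mathbb{F}_q)\setminus\{\tilde P\}$ project to the $q$ points $(\alpha:0:1)$, each with $G\cong D_{q-1}$.

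I expect the only real bookkeeping to be the last identification, for which I would use the parametrization $(u:v)\mapsto(uv:0:u^2:v^2)$ of $\mathcal{C}$: here $\tilde P$ is the image of $(0:1)$, while the images $(\alpha:0:1:\alpha^2)$ of $(1:\alpha)$ run over the remaining $q$ points of $\mathcal{C}(\mathbb{F}_q)$ and project under $\rho$ to exactly $(\alpha:0:1)$, $\alpha\in\mathbb{F}_q$. Checking that these $q+2$ points are pairwise distinct and that no Galois line through $\tilde P$ has been overlooked (a line through $\tilde P$ lying outside $\{Y=0\}$ and off $(0:1:0:0)$ meets none of the four families) then finishes the argument. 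The one step I would treat most carefully is the function-field identity underlying the dictionary; once it is in hand, everything else is elementary conic geometry over $\mathbb{F}_q$.
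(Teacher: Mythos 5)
Your proposal is correct and follows essentially the same route as the paper: the paper proves this corollary exactly by projecting $\varphi(X)$ birationally from $(0:0:0:1)=\varphi(P_\infty)$ onto the plane model $C$ and invoking the Theorem, under which the $q+2$ Galois lines through the center --- the case-(I) line $\{X=Z=0\}$, the tangent $T_{\tilde P}\mathcal{C}=\{Y=Z=0\}$, and the $q$ secants $\overline{\tilde P R'}$ with $R'\in\mathcal{C}(\mathbb{F}_q)\setminus\{\tilde P\}$ --- correspond to the Galois points $P$, $Q$ and $(\alpha:0:1)$ with groups $C_{\frac{q+1}{2}}$, $\mathbb{F}_q$ and $D_{q-1}$, respectively. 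Your explicit line--point dictionary and the verification that no other line through the center lies in families (I)--(IV) merely spell out details the paper leaves implicit.
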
 

\begin{corollary} \label{three outer Galois points} 
There exists a plane model $C' \subset \mathbb{P}^2$ of degree $q+1$ admitting exactly three Galois points $P_1, P_2, P_3$ such that they are collinear outer Galois points, $G_{P_1} \cong G_{P_2} \cong C_{q+1}$ and $G_{P_3} \cong D_{q+1}$.  
\end{corollary}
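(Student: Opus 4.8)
The plan is to realise $C'$ as the image of $\varphi(X)$ under projection from a suitably chosen center $P_0\in\mathbb{P}^3$, and to read off its Galois points from Theorem via the standard bridge between Galois lines and Galois points of projected plane models: if $P_0\notin\varphi(X)$ and the projection $\rho_{P_0}\colon\varphi(X)\dashrightarrow C'$ is birational, then a point $\bar P$ of the plane is a Galois point of $C'$ if and only if the line $\overline{P_0\bar P}$ is a Galois line of $\varphi(X)$, and then $G_{\bar P}\cong G_{\overline{P_0\bar P}}$; moreover $\bar P$ is an \emph{outer} Galois point exactly when $\overline{P_0\bar P}\cap\varphi(X)=\emptyset$. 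Two facts from Section~2 will be used throughout: $\varphi(X)$ has degree $q+1$, and $\varphi(X)\cap\{Y=0\}=\mathcal{C}(\mathbb{F}_q)$, a set of $q+1$ distinct points.

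For the center I would take a point $P_0\in\{Y=0\}$ that is $\mathbb{F}_{q^2}$-rational but not $\mathbb{F}_q$-rational, external to $\mathcal{C}$ over $\mathbb{F}_{q^2}$, and whose unique $\mathbb{F}_q$-line $\overline{P_0P_0^q}$ is external to $\mathcal{C}$ over $\mathbb{F}_q$. Existence for $q\ge 5$ is a short count on the conic: choose one of the $q(q-1)/2$ external $\mathbb{F}_q$-lines $\ell_0$, then pick $P_0$ among the $\mathbb{F}_{q^2}$-points of $\ell_0$ lying outside $\mathbb{F}_q$ that are external to $\mathcal{C}$ over $\mathbb{F}_{q^2}$. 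I then locate every Galois line through $P_0$ using Theorem. Each line of type (I) or (II) passes through $(0:1:0:0)$; the line joining $P_0$ and $(0:1:0:0)$ is not an $\mathbb{F}_q$-line (as $P_0\notin\mathbb{F}_q$) and does not meet $\mathcal{C}$ (its only point of $\{Y=0\}$ is $P_0\notin\mathcal{C}$), so it is of neither type, and no Galois line of type (I) or (II) passes through $P_0$. Since $P_0\notin\mathbb{F}_q$, the unique $\mathbb{F}_q$-line through $P_0$ is $\overline{P_0P_0^q}$, giving exactly one line of type (III); by construction it is external, i.e.\ case (f), with $G\cong D_{q+1}$. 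Finally, the two tangents to $\mathcal{C}$ from the external point $P_0$ touch at $\mathbb{F}_{q^2}$-points that cannot be $\mathbb{F}_q$-rational (else a second $\mathbb{F}_q$-line would pass through $P_0$), so both are of type (IV), i.e.\ case (g), with $G\cong C_{q+1}$. Thus exactly three Galois lines pass through $P_0$, all lying in the plane $\{Y=0\}$.

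It remains to verify that $\rho_{P_0}$ is birational, so that $\deg C'=q+1$ and the correspondence applies verbatim. The key observation is that $P_0$ lies on no secant of $\mathcal{C}(\mathbb{F}_q)$: such a secant is an $\mathbb{F}_q$-line through $P_0$, hence equals $\overline{P_0P_0^q}$, which is impossible since that line contains no $\mathbb{F}_q$-point of $\mathcal{C}$. Hence the $q+1$ points of $\mathcal{C}(\mathbb{F}_q)=\varphi(X)\cap\{Y=0\}$ have $q+1$ distinct images, all lying on the line $\lambda_0=\rho_{P_0}(\{Y=0\})$ and on $C'$; B\'ezout's theorem then gives $\deg C'\ge q+1$, while $\deg C'=(q+1)/\deg\rho_{P_0}\le q+1$, forcing $\deg\rho_{P_0}=1$. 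With birationality established, the three Galois lines correspond to exactly three Galois points $P_1,P_2,P_3$, which lie on $\lambda_0$ and are therefore collinear; each of the three lines misses $\varphi(X)$ (the two tangents meet $\mathcal{C}$ only at non-rational points, and $\overline{P_0P_0^q}$ is external), so $P_1,P_2,P_3$ are outer; and $G_{P_1}\cong G_{P_2}\cong C_{q+1}$, $G_{P_3}\cong D_{q+1}$, as required.

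I expect the main obstacle to be the simultaneous control, for a single center $P_0$, of \emph{both} the exact count of Galois lines through it \emph{and} the birationality of the projection. Both are governed by the position of $P_0$ relative to $\mathcal{C}$ over $\mathbb{F}_q$ and over $\mathbb{F}_{q^2}$, and the decisive input is the uniqueness of the $\mathbb{F}_q$-line through a point of $\mathbb{F}_{q^2}\setminus\mathbb{F}_q$: this single fact at once kills the type (I) and (II) lines, pins down the unique type (III) line, forces the two tangency points to be non-rational, and rules out all secants of $\mathcal{C}(\mathbb{F}_q)$ through $P_0$.
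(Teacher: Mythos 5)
Your proof is correct, and while the curve and the center are essentially the same as in the paper, two key sub-steps are handled by genuinely different arguments. The paper builds the center explicitly: it takes $R=(\alpha:0:1:\alpha^2)$, $R'=(\beta:0:1:\beta^2)$ with $\beta=\gamma/\alpha$ for a non-square $\gamma\in\mathbb{F}_q$, computes $Q=T_R\mathcal{C}\cap T_{R'}\mathcal{C}=\bigl(\frac{\alpha+\beta}{2}:0:1:\alpha\beta\bigr)$, and checks by coordinates that the unique $\mathbb{F}_q$-line through $Q$ is $\overline{QQ'}$ with $Q'=(1:0:0:0)$, defined by $Y=W-\gamma Z=0$, meeting $\mathcal{C}$ at the non-rational pair $(\pm\sqrt{\gamma}:0:1:\gamma)$ --- which is exactly your condition that the unique $\mathbb{F}_q$-line through the center be external; you instead obtain such a center by a synthetic count (external $\mathbb{F}_q$-line, then an external $\mathbb{F}_{q^2}$-point on it avoiding the $q+1$ rational points, which your bound $(q^2-1)/2>q+1$ for $q\ge 5$ justifies). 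The more substantive divergence is birationality of the projection: the paper derives it Galois-theoretically, noting that $G_{T_R\mathcal{C}}$ fixes exactly $\{R,R^q\}$ on $\mathcal{C}$ and $G_{T_{R'}\mathcal{C}}$ exactly $\{R',R'^q\}$, so $G_{T_R\mathcal{C}}\cap G_{T_{R'}\mathcal{C}}=\{1\}$ and the subgroup corresponding to the intermediate field $\rho_Q^*k(C')$ inside both cyclic extensions is trivial; you instead argue geometrically that $P_0$ lies on no secant of $\mathcal{C}(\mathbb{F}_q)$ (via uniqueness of the $\mathbb{F}_q$-line through a non-rational point), so the $q+1$ points of $\varphi(X)\cap\{Y=0\}$ have distinct collinear images, and B\'ezout forces $\deg\rho_{P_0}=1$. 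Your route is more elementary and simultaneously yields $\deg C'=q+1$; the paper's is shorter given that the groups $G_{T_R\mathcal{C}}$ have already been described explicitly in the proof of case (IV). Your identification of exactly three Galois lines through the center is also spelled out more fully than in the paper, which leaves that count implicit after invoking Theorem. One hair to dot: in the B\'ezout step you should note $C'\neq\lambda_0$ (otherwise $q+1$ points on $C'\cap\lambda_0$ give no contradiction); this is immediate since $\varphi(X)$ is non-degenerate in $\mathbb{P}^3$ (Fact \ref{linear system}), or since $X$ has positive genus, so this is a trivially fillable omission rather than a gap.
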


\section{Preliminaries}
The following fact regarding Galois extensions is needed at several places in the proof of the ``only if'' part of Theorem (see \cite[III.7.1, III.7.2]{stichtenoth2}). 

\begin{fact} \label{Galois extension}
Let $\pi: X \rightarrow X'$ be a surjective morphism of smooth projective curves. 
If the field extension $k(X)/\pi^*k(X')$ is Galois, then the following hold. 
\begin{itemize} 
\item[(a)] The Galois group acts on any fiber of $\pi$ transitively. 
\item[(b)] The ramification index $e_Q$ at $Q$ coincides with the order of the stabilizer subgroup of $Q$ for any point $Q \in X$. 
\item[(c)] If $\pi(Q)=\pi(Q')$ for two points $Q, Q' \in X$, then the ramification indices $e_Q, e_{Q'}$ are the same. 
\end{itemize}
\end{fact}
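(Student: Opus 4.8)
The plan is to translate the three assertions into the decomposition theory of places in a Galois extension of function fields, and to exploit the algebraic closedness of $k$. Write $G=\mathrm{Gal}(k(X)/\pi^*k(X'))$. First I would record the dictionary used throughout: since a smooth projective curve is recovered from its function field, each $\sigma\in G$, being a field automorphism of $k(X)$ fixing $\pi^*k(X')$, corresponds to an automorphism $\tilde\sigma\colon X\to X$ with $\pi\circ\tilde\sigma=\pi$, so that $G$ acts on $X$ and carries each fiber $\pi^{-1}(P)$ to itself. Points of $X$ correspond to places (discrete valuations $v_Q$) of $k(X)$, and the fiber over $P$ corresponds to the set of places lying over the place of $\pi^*k(X')$ determined by $P$.

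The core is part (a). Fix a place $P$ of $\pi^*k(X')$ and suppose, for contradiction, that two points $Q,Q'$ of the fiber lie in different $G$-orbits, so that the places $\{Q'\}\cup\{\sigma Q:\sigma\in G\}$ are pairwise distinct. By the approximation theorem I would choose $z\in k(X)$ that is integral at every place, vanishes at $Q'$ (so $v_{Q'}(z)\ge 1$), and is a unit at every translate $\sigma Q$ (so $v_{\sigma Q}(z)=0$). Passing to the norm $N(z)=\prod_{\sigma\in G}\sigma(z)$, which is $G$-invariant and hence lies in the base field $\pi^*k(X')$, I would compute $v_Q(N(z))=\sum_{\sigma}v_{\sigma^{-1}Q}(z)=0$, since every $\sigma^{-1}Q$ is a translate of $Q$, while $v_{Q'}(N(z))\ge v_{Q'}(z)\ge 1$ from the identity term together with integrality elsewhere. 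But for $w\in\pi^*k(X')$ one has $v_Q(w)=e_Q\,v_P(w)$ and $v_{Q'}(w)=e_{Q'}\,v_P(w)$, so $v_Q(w)$ and $v_{Q'}(w)$ vanish simultaneously; this contradicts $v_Q(N(z))=0<v_{Q'}(N(z))$. Hence the fiber is a single orbit, proving (a).

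Parts (c) and (b) then follow formally. For (c), if $\pi(Q)=\pi(Q')=P$, transitivity from (a) provides $\tilde\sigma\in G$ with $\tilde\sigma(Q)=Q'$; since $\tilde\sigma$ is an automorphism of $X$ with $\pi\circ\tilde\sigma=\pi$, it induces an isomorphism $\mathcal{O}_{X,Q'}\cong\mathcal{O}_{X,Q}$ of $\mathcal{O}_{X',P}$-algebras, and the ramification index, being a local invariant, gives $e_{Q'}=e_Q$. For (b), I would combine the fundamental identity $\sum_{Q_i\mid P}e_{Q_i}f_{Q_i}=[k(X):\pi^*k(X')]=|G|$ with two simplifications: because $k$ is algebraically closed, every residue field is $k$, so all residue degrees satisfy $f_{Q_i}=1$; and by (c) all $e_{Q_i}$ equal a common value $e$. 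Thus the fiber has exactly $|G|/e$ points, whereas the orbit–stabilizer theorem applied to the transitive action of (a) says it has $|G|/|D_Q|$ points, where $D_Q$ is the stabilizer of $Q$. Comparing yields $|D_Q|=e=e_Q$, which is (b).

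The only genuinely substantive step is the transitivity in (a), whose proof rests on the approximation theorem and the norm computation above; I expect this to be the main obstacle. Once (a) is in hand, assertions (b) and (c) are bookkeeping consequences of (a), the fundamental identity, and the vanishing of all residue degrees over the algebraically closed field $k$.
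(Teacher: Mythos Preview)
Your argument is correct and is essentially the standard textbook proof; the paper itself does not prove this Fact but simply records it with a citation to Stichtenoth's \emph{Algebraic Function Fields and Codes}, III.7.1--III.7.2, where exactly this transitivity-via-norm argument for (a) and the orbit--stabilizer count for (b) appear. One minor wording slip: in part~(a) you cannot require $z$ to be integral at \emph{every} place of $k(X)$, since that would force $z\in k$ and then ``vanishes at $Q'$'' and ``unit at $\sigma Q$'' are incompatible; what you need (and what you actually use in the estimate $v_{Q'}(N(z))\ge 1$) is that $z$ be integral at every place in the finite fiber $\pi^{-1}(P)$, which the approximation theorem does provide.
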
 

The system of homogeneous coordinates on $\mathbb{P}^3$ is denoted by $(X:Y:Z:W)$. 
For $\mathbb{P}^2$ (containing the curve $C$), it is denoted by $(X: Y: Z)$. 
For different points $P, Q \in \mathbb{P}^3$, the line passing through $P$ and $Q$ is denoted by $\overline{PQ}$. 

Hereafter, we consider the curve $C \subset \mathbb{P}^2$, which is the projective closure of the affine curve defined by 
$$ y^\frac{q+1}{2}=x^q-x, $$
and its smooth model $X$. 
See \cite{stichtenoth1} or \cite[Section 12.1]{hkt} for the properties of this curve. 
Let $P_{\infty} \in X$ be the pole of the function $x$ or $y$. 
It is known that 
$$ (x)_\infty=\frac{q+1}{2}P_\infty, \ (y)_\infty=q P_{\infty} $$
(see, for example, \cite[Lemma 12.1]{hkt}). 
The following fact can be confirmed easily (see, for example, \cite[Lemma 12.2]{hkt}). 

\begin{fact} \label{linear system} 
It follows that
$$\mathcal{L}((q+1)P_\infty)=\langle 1, x, y, x^2 \rangle.  $$
In particular, the linear system induced by $\varphi: X \rightarrow \mathbb{P}^3$ is complete. 
\end{fact}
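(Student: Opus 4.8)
The plan is to construct an explicit monomial $k$-basis for the ring of functions on $X$ that are regular away from $P_\infty$, and then to read off which monomials lie in $\mathcal{L}((q+1)P_\infty)$ directly from their pole orders. First I would record that the affine curve defined by $F:=y^{\frac{q+1}{2}}-x^q+x=0$ is smooth: since $q$ is a power of the characteristic $p$, one computes $\partial F/\partial x=-qx^{q-1}+1=1$, which vanishes nowhere. Hence the affine curve is exactly $X\setminus\{P_\infty\}$, and its coordinate ring $A=k[x,y]/(F)$ coincides with $\bigcup_{m\ge 0}\mathcal{L}(mP_\infty)$, the functions on $X$ whose only possible pole is $P_\infty$. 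The relation $y^{\frac{q+1}{2}}=x^q-x$ shows that $A$ is free over $k[x]$ with basis $1,y,\dots,y^{\frac{q-1}{2}}$, so the monomials $x^iy^j$ with $i\ge 0$ and $0\le j\le \frac{q-1}{2}$ form a $k$-basis of $A$.

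The crucial step is to prove that these basis monomials have pairwise distinct pole orders at $P_\infty$. From $(x)_\infty=\frac{q+1}{2}P_\infty$ and $(y)_\infty=qP_\infty$, the pole order of $x^iy^j$ equals $i\frac{q+1}{2}+jq$. If $i\frac{q+1}{2}+jq=i'\frac{q+1}{2}+j'q$, then $(i-i')\frac{q+1}{2}=(j'-j)q$; since $\gcd\!\left(\frac{q+1}{2},q\right)=1$, the integer $\frac{q+1}{2}$ divides $j'-j$, and $|j'-j|\le \frac{q-1}{2}<\frac{q+1}{2}$ forces $j=j'$ and then $i=i'$. Distinctness means that for any $f\in A$ expressed in this basis, the order of the pole of $f$ at $P_\infty$ is the maximum of the pole orders of the monomials appearing in $f$, with no cancellation of the leading term. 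Consequently $f\in\mathcal{L}((q+1)P_\infty)$ if and only if every monomial occurring in $f$ has pole order at most $q+1$.

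It then remains to enumerate the monomials $x^iy^j$ with $i\frac{q+1}{2}+jq\le q+1$ and $0\le j\le\frac{q-1}{2}$. For $j\ge 2$ the pole order is at least $2q>q+1$; for $j=1$ one needs $i\frac{q+1}{2}\le 1$, forcing $i=0$; and for $j=0$ one needs $i\le 2$. This leaves precisely $1,x,x^2$ (pole orders $0,\frac{q+1}{2},q+1$) together with $y$ (pole order $q$), so $\mathcal{L}((q+1)P_\infty)=\langle 1,x,y,x^2\rangle$ is four-dimensional. Finally, since $\varphi$ is defined by the four functions $x,y,1,x^2$, which already span all of $\mathcal{L}((q+1)P_\infty)$, the linear system inducing $\varphi$ is the complete linear system $|(q+1)P_\infty|$. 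I expect the only subtle point to be the distinctness of pole orders in the second step: it is exactly what guarantees a unique leading monomial and thereby prevents the pole at $P_\infty$ from dropping through cancellation, reducing the whole computation to the finite enumeration above; the smoothness of the affine model and the freeness of $A$ over $k[x]$ are routine.
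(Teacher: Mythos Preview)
Your argument is correct. The paper does not supply its own proof of this statement; it simply records it as a fact that ``can be confirmed easily'' and cites \cite[Lemma~12.2]{hkt}. Your approach---checking smoothness of the affine model, exhibiting the monomial $k$-basis $\{x^iy^j:\ i\ge 0,\ 0\le j\le (q-1)/2\}$ of the affine coordinate ring, proving these monomials have pairwise distinct pole orders at $P_\infty$ via $\gcd\!\left(\tfrac{q+1}{2},q\right)=1$, and then enumerating those of pole order at most $q+1$---is precisely the sort of direct verification one expects behind such a citation, and every step is sound.
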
 

For $\alpha \in \mathbb{F}_q$, the point of $X$ over the smooth point $(\alpha:0:1) \in C$ is denoted by $P_\alpha$.  
Since 
$$ \varphi=(x:y:1:x^2)=\left(\frac{1}{x}: \frac{y}{x^2}:\frac{1}{x^2}:1\right), $$
it follows that $\varphi(P_\infty)=(0:0:0:1) \in \mathcal{C}$, and that $\varphi(P_{\infty})$ is a smooth point of $\varphi(X)$, namely, $\varphi$ is an embedding. 
Note that 
$$ \mathcal{C}(\mathbb{F}_q)=\varphi(X) \cap \{Y=0\}=\varphi(\{P_{\infty}\} \cup \{P_\alpha \ | \ \alpha \in \mathbb{F}_q \}). $$
For Weierstrass points, the following holds (see \cite[Theorem 12.10]{hkt}, \cite{stichtenoth1}). 

\begin{fact} \label{Weierstrass points} 
The set $ \{P_{\infty}\} \cup \{P_\alpha \ | \ \alpha \in \mathbb{F}_q \}$ coincides with the set of all points $P \in X$ such that the Weierstrass semigroup of $P$ is generated by $(q+1)/2$ and $q$. 
In particular, ${\rm Aut}(X)$ acts on $\mathcal{C}(\mathbb{F}_q)$. 
\end{fact}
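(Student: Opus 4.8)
The plan is to verify directly that each of the listed points carries the Weierstrass semigroup $\langle (q+1)/2, q\rangle$, then to prove that no other point of $X$ does, and finally to read off the action of $\mathrm{Aut}(X)$ as a formal consequence of the invariance of Weierstrass semigroups under automorphisms. I would begin by assembling the numerical data. Since $\gcd((q+1)/2,q)=1$, the numerical semigroup $\langle (q+1)/2,q\rangle$ has exactly $\frac{1}{2}\left(\frac{q+1}{2}-1\right)(q-1)=\frac{(q-1)^2}{4}$ gaps. On the other hand, the morphism $x\colon X\to\mathbb{P}^1$ has degree $(q+1)/2$, and because $y^{(q+1)/2}=\prod_{a\in\mathbb{F}_q}(x-a)$ is separable with $p\nmid(q+1)$, the map $x$ is tamely and totally ramified exactly at $P_\infty$ and at the $q$ points $P_\alpha$. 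Riemann--Hurwitz then gives $2g-2=-(q+1)+(q+1)\cdot\frac{q-1}{2}$, so $g=\frac{(q-1)^2}{4}$. Thus the gap number of $\langle (q+1)/2,q\rangle$ equals $g$, which is the mechanism that will force equality of semigroups below.

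Next I would exhibit the semigroup at each listed point. At $P_\infty$ the stated pole orders give $(x)_\infty=\frac{q+1}{2}P_\infty$ and $(y)_\infty=qP_\infty$, so $\langle(q+1)/2,q\rangle$ is contained in the Weierstrass semigroup of $P_\infty$; since any Weierstrass semigroup has exactly $g$ gaps and $\langle(q+1)/2,q\rangle$ already has $g$ gaps, the two coincide. For $P_0$ I would use the functions $1/x$ and $y/x^2$: from $(x)=\frac{q+1}{2}P_0-\frac{q+1}{2}P_\infty$ and $(y)=\sum_{\alpha\in\mathbb{F}_q}P_\alpha-qP_\infty$ one computes $(1/x)_\infty=\frac{q+1}{2}P_0$ and $(y/x^2)_\infty=qP_0$, so the same gap-count argument gives the semigroup $\langle(q+1)/2,q\rangle$ at $P_0$. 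Finally, for $\alpha\in\mathbb{F}_q$ the substitution $x\mapsto x+\alpha$, $y\mapsto y$ is an automorphism of $X$ (here $\alpha^q=\alpha$ makes $x^q-x$ invariant) which fixes $P_\infty$ and carries $P_0$ to $P_\alpha$; since automorphisms preserve Weierstrass semigroups, every $P_\alpha$ inherits the semigroup $\langle(q+1)/2,q\rangle$.

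The hard direction, and the step I expect to be the main obstacle, is to show that no further point of $X$ carries this semigroup. My plan is to reduce this to the uniqueness of the pencil of degree $(q+1)/2$. If $P$ has semigroup $\langle(q+1)/2,q\rangle$, then $(q+1)/2$ is its multiplicity, which yields a function with pole divisor exactly $\frac{q+1}{2}P$, hence a morphism $f\colon X\to\mathbb{P}^1$ of degree $(q+1)/2$ totally ramified at $P$. Granting that $|(x)_\infty|$ is the unique $g^1_{(q+1)/2}$ on $X$ — this is precisely where I expect to invoke the analysis of this curve in \cite[Theorem 12.10]{hkt} and \cite{stichtenoth1}, since $(q+1)/2$ is the gonality and the gonality pencil is unique — the map $f$ differs from $x$ by an automorphism of $\mathbb{P}^1$, so $f$ and $x$ share the same ramification locus. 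As $x$ is totally ramified precisely at $P_\infty$ and the $P_\alpha$, we conclude $P\in\{P_\infty\}\cup\{P_\alpha\mid\alpha\in\mathbb{F}_q\}$, completing the identification of the set. The last assertion is then immediate: the Weierstrass semigroup is an isomorphism invariant, so $\mathrm{Aut}(X)$ permutes the set of points carrying the semigroup $\langle(q+1)/2,q\rangle$, which under $\varphi$ is exactly $\mathcal{C}(\mathbb{F}_q)=\varphi(X)\cap\{Y=0\}$; hence $\mathrm{Aut}(X)$ acts on $\mathcal{C}(\mathbb{F}_q)$.
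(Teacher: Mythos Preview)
The paper does not supply a proof here; the statement is recorded as a known Fact with citations to \cite[Theorem~12.10]{hkt} and \cite{stichtenoth1}. So there is no in-paper argument to compare against, and your write-up already contains considerably more detail than the paper itself offers.

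Your forward inclusion is correct and self-contained: the gap count $\frac{(q-1)^2}{4}=g$, the given pole orders at $P_\infty$, the explicit functions $1/x$ and $y/x^2$ at $P_0$, and the translation automorphisms $x\mapsto x+\alpha$ together pin down the semigroup at every listed point. The concluding clause about $\mathrm{Aut}(X)$ is likewise an immediate consequence of the invariance of Weierstrass semigroups.

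The step to scrutinize is the reverse inclusion. You reduce it to the assertion that $|(x)_\infty|$ is the \emph{unique} $g^1_{(q+1)/2}$ on $X$, and then point back to the same two references for that uniqueness. The reduction is sound, but you should verify that \cite[Theorem~12.10]{hkt} or \cite{stichtenoth1} actually isolate and prove uniqueness of the gonality pencil, rather than establishing the Weierstrass-point characterization by some other route (for instance a direct order-sequence/Wronskian computation, or the St\"ohr--Voloch machinery for $\mathbb{F}_{q^2}$-maximal curves). If those sources prove the Fact directly without passing through uniqueness of the $g^1_{(q+1)/2}$, then your argument becomes circular at exactly the point where it leans on them, and you would need an independent justification of that uniqueness to make your route genuinely self-contained.
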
 

According to Facts \ref{linear system} and \ref{Weierstrass points}, the following holds. 

\begin{fact} \label{automorphisms}
There exists an inclusion 
$$ {\rm Aut}(X) \hookrightarrow PGL(3, k), $$
of which the image preserves the curve $\varphi(X) \subset \mathbb{P}^3$. 
\end{fact}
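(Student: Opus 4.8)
The plan is to produce, for each $\sigma \in {\rm Aut}(X)$, a projective linear transformation of $\mathbb{P}^3$ that realizes $\sigma$ through $\varphi$; the whole point is that the complete linear system defining $\varphi$ is preserved by ${\rm Aut}(X)$, even though $\varphi$ is built from the non-canonical base point $P_\infty$. The starting observation is that, by Fact \ref{Weierstrass points}, the set $W := \{P_\infty\} \cup \{P_\alpha \mid \alpha \in \mathbb{F}_q\}$ is intrinsic (it is exactly the set of points whose Weierstrass semigroup is generated by $(q+1)/2$ and $q$), so ${\rm Aut}(X)$ acts on $W$; in particular $\sigma^{-1}(P_\infty) \in W$ for every $\sigma$.

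The key step I would carry out is to show that the divisor class of $(q+1)P_\infty$ is ${\rm Aut}(X)$-invariant, which reduces to proving $(q+1)P \sim (q+1)P_\infty$ for every $P \in W$. For $\alpha \in \mathbb{F}_q$ I would compute the divisor of $x-\alpha$: since $(x)_\infty = \frac{q+1}{2}P_\infty$, the morphism $x\colon X \to \mathbb{P}^1$ has degree $\frac{q+1}{2}$, and its fibre over $\alpha$ is the single point $P_\alpha$ (because $y^{(q+1)/2}=x^q-x$ forces $y$, hence $x-\alpha$, to vanish there to order $(q+1)/2$), so $x$ is totally ramified over $\alpha$ and $(x-\alpha)=\frac{q+1}{2}(P_\alpha - P_\infty)$. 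Doubling gives $(q+1)P_\alpha \sim (q+1)P_\infty$, and the case $P=P_\infty$ is trivial, establishing the invariance of $[(q+1)P_\infty]$.

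It then remains to combine this with completeness (Fact \ref{linear system}). For $\sigma \in {\rm Aut}(X)$, pulling back $V := \mathcal{L}((q+1)P_\infty)=\langle 1,x,y,x^2\rangle$ gives the complete system $\sigma^\ast V = \mathcal{L}((q+1)\sigma^{-1}(P_\infty))$, and choosing $g \in k(X)^\times$ with $(g)=(q+1)P_\infty-(q+1)\sigma^{-1}(P_\infty)$ identifies $g\cdot \sigma^\ast V = V$ as one and the same $4$-dimensional space. Hence $\varphi\circ\sigma$ and $\varphi$ are defined by the same complete linear system up to a change of basis, so they differ by a projective linear transformation $A$ of $\mathbb{P}^3$. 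The assignment $\sigma \mapsto A$ is the desired homomorphism; it is injective because $\varphi$ is an embedding (so $A$ determines $\sigma$ on $\varphi(X)$, hence on $X$), and its image preserves $\varphi(X)=A(\varphi(X))$, giving the inclusion of the statement.

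I expect the main obstacle to be conceptual rather than computational: the step that really needs care is the invariance of the linear system, since an automorphism may send $P_\infty$ to a different point of $W$, and one must check this does not change the map up to $PGL$. The linear equivalences $(q+1)P \sim (q+1)P_\infty$ for $P\in W$ are precisely what make this work, so the heart of the argument is the short divisor computation for $x-\alpha$.
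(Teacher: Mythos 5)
Your proposal is correct and is essentially the paper's own argument: the paper deduces this fact directly from Facts \ref{linear system} and \ref{Weierstrass points} without spelling out the details, and what you supply --- the linear equivalences $(q+1)P_\alpha \sim (q+1)P_\infty$ via $(x-\alpha)=\frac{q+1}{2}(P_\alpha-P_\infty)$, plus completeness of the system to conclude that $\varphi\circ\sigma$ and $\varphi$ differ by a projectivity --- is exactly the intended reasoning. The only nit is a harmless sign slip: for $g\cdot\sigma^\ast V=V$ you want $(g)=(q+1)\sigma^{-1}(P_\infty)-(q+1)P_\infty$, i.e., the inverse of the $g$ you chose.
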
 

It can be confirmed that the tangent line $T_{\varphi(P_\infty)}\varphi(X) \subset \mathbb{P}^3$ at $\varphi(P_\infty)$ is defined by $X=Z=0$, and the tangent line $T_{\varphi(P_\alpha)}\varphi(X)$ at $\varphi(P_\alpha)$ is defined by $X-\alpha Z=W-2\alpha X+\alpha^2 Z=0$, namely, all such lines pass through the point $(0:1:0:0)$. 
If $\sigma \in {\rm Aut}(X) \subset PGL(3, k)$ fixes all points of the set $\mathcal{C}(\mathbb{F}_q)$, then $\sigma$ is represented by the matrix 
$$
\left(\begin{array}{cccc}
1 & 0 & 0 & 0 \\
0 & \zeta & 0 & 0 \\
0 & 0 & 1 & 0 \\
0 & 0 & 0 & 1 
\end{array} \right)
$$ 
for some $\zeta \in k$. 
Considering the defining equation $y^\frac{q+1}{2}=x^q-x$, we have $\zeta^\frac{q+1}{2}=1$.   
Therefore, the following holds. 

\begin{fact} \label{automorphisms2}
The restriction map $\sigma \mapsto \sigma |_{\mathcal{C}(\mathbb{F}_q)}$ induces a homomorphism 
$$ {\rm Aut}(X) \rightarrow {\rm Aut}_{\mathbb{F}_q}(\mathcal{C}) \cong PGL(2, \mathbb{F}_q). $$
Furthermore, the kernel coincides with the set of all automorphisms $(x, y) \mapsto (x, \zeta y)$ with $\zeta^{\frac{q+1}{2}}=1$. 
\end{fact}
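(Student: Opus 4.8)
The plan is to derive the Fact from the two results already in place. By Fact~\ref{automorphisms}, every $\sigma \in {\rm Aut}(X)$ is realized as an element of $PGL(3,k)$ preserving $\varphi(X)$, and by Fact~\ref{Weierstrass points} it permutes the finite set $\mathcal{C}(\mathbb{F}_q) = \varphi(X) \cap \{Y=0\}$. First I would record the geometry of this set: it consists of $q+1 \ge 6$ points, no three of which are collinear, since a line meets the conic $\mathcal{C}$ in at most two points. Three non-collinear points span a unique plane, so $\{Y=0\}$ is the only plane containing $\mathcal{C}(\mathbb{F}_q)$; as $\sigma$ permutes this set, $\sigma$ must preserve $\{Y=0\}$. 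Similarly, five points in general position lie on a unique conic, and with $q+1 \ge 6$ such points the conic $\mathcal{C}$ is the only conic through $\mathcal{C}(\mathbb{F}_q)$; hence $\sigma$ also preserves $\mathcal{C}$. This yields a $k$-automorphism $\sigma|_{\mathcal{C}}$ of $\mathcal{C} \cong \mathbb{P}^1$, and $\sigma \mapsto \sigma|_{\mathcal{C}}$ is a group homomorphism because restriction commutes with composition.

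The next step is to verify that $\sigma|_{\mathcal{C}}$ is defined over $\mathbb{F}_q$, that is, lies in ${\rm Aut}_{\mathbb{F}_q}(\mathcal{C}) \cong PGL(2,\mathbb{F}_q)$. Choosing an $\mathbb{F}_q$-rational isomorphism $\mathbb{P}^1 \cong \mathcal{C}$ that identifies $\mathbb{P}^1(\mathbb{F}_q)$ with $\mathcal{C}(\mathbb{F}_q)$, the map $\sigma|_{\mathcal{C}}$ becomes a M\"obius transformation $g$ permuting $\mathbb{P}^1(\mathbb{F}_q)$. A projective automorphism of $\mathbb{P}^1$ is determined by the images of three points, and here the images $g(0), g(1), g(\infty)$ are again $\mathbb{F}_q$-rational; consequently $g$ has coefficients in $\mathbb{F}_q$, i.e. $g \in PGL(2,\mathbb{F}_q)$. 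This produces the asserted homomorphism ${\rm Aut}(X) \to {\rm Aut}_{\mathbb{F}_q}(\mathcal{C})$.

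It remains to compute the kernel, consisting of those $\sigma$ that fix every point of $\mathcal{C}(\mathbb{F}_q)$. As already worked out in the paragraph preceding the Fact, such a $\sigma$ is represented by the diagonal matrix $\mathrm{diag}(1,\zeta,1,1)$, and substituting into $y^{(q+1)/2}=x^q-x$ forces $\zeta^{(q+1)/2}=1$; conversely each automorphism $(x,y)\mapsto(x,\zeta y)$ with $\zeta^{(q+1)/2}=1$ visibly fixes $\mathcal{C}(\mathbb{F}_q)$ pointwise, so the kernel is exactly this family. I expect the main obstacle to be the rigidity-and-rationality step of the first two paragraphs: one must guarantee that a projective transformation of $\mathbb{P}^3$ which merely permutes the $q+1$ points of $\mathcal{C}(\mathbb{F}_q)$ in fact preserves both the plane $\{Y=0\}$ and the conic $\mathcal{C}$ and restricts to an $\mathbb{F}_q$-rational automorphism. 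This is precisely where the hypothesis $q \ge 5$ (giving $q+1 \ge 6 > 5$) and the no-three-collinear property of points on a conic are used.
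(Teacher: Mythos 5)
Your proposal is correct and takes essentially the same route as the paper: the paper's own justification is precisely the paragraph preceding the Fact (the tangent lines at points of $\mathcal{C}(\mathbb{F}_q)$ concurrent at $(0:1:0:0)$, the resulting matrix $\mathrm{diag}(1,\zeta,1,1)$ for an automorphism fixing $\mathcal{C}(\mathbb{F}_q)$ pointwise, and the condition $\zeta^{\frac{q+1}{2}}=1$ from the defining equation), which you invoke verbatim for the kernel. The details you add for the homomorphism part --- uniqueness of the plane $\{Y=0\}$ and of the conic through $q+1\ge 6$ points with no three collinear, and the three-rational-points argument showing the restriction lies in $PGL(2,\mathbb{F}_q)$ --- are exactly the standard steps the paper leaves implicit, not a different method.
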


Finally, we confirm the following lemma regarding the orders of hyperplanes in $\mathbb{P}^3$. 

\begin{lemma} \label{tangent hyperplane} 
For any point $P \in X$ with $\varphi(P) \in \mathcal{C}(\mathbb{F}_q)$, there exists a unique hyperplane $H \subset \mathbb{P}^3$ such that 
$$ {\rm ord}_P\varphi^*H=q+1. $$
Furthermore, the plane $H$ is defined over $\mathbb{F}_q$. 
\end{lemma}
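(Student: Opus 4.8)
The plan is to identify $H$ with the osculating (maximal-contact) hyperplane of $\varphi(X)$ at $\varphi(P)$ and to compute it explicitly. Since the linear system inducing $\varphi$ is $\mathcal{L}((q+1)P_\infty)$ (Fact~\ref{linear system}), the curve $\varphi(X)$ has degree $q+1$, so every hyperplane section is a divisor of degree $q+1$. Hence the condition ${\rm ord}_P\varphi^*H=q+1$ is equivalent to $\varphi^*H=(q+1)P$, that is, to $H$ realizing the top value of the $(H,P)$-order sequence. Existence and uniqueness therefore reduce to showing that this top order equals $q+1$ and is attained by a single linear form; for this I identify the space of linear forms $\langle X,Y,Z,W\rangle$ with $\langle 1,x,y,x^2\rangle$ via $\varphi$ and read off orders at $P$.

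First I would treat $P=P_\infty$. Using $(x)_\infty=\frac{q+1}{2}P_\infty$ and $(y)_\infty=qP_\infty$, I dehomogenize by $W$, which is nonzero at $\varphi(P_\infty)=(0:0:0:1)$: the functions $X/W=1/x$, $Y/W=y/x^2$, $Z/W=1/x^2$ have orders $\frac{q+1}{2}$, $1$, $q+1$ at $P_\infty$. A hyperplane through $\varphi(P_\infty)$ has vanishing $W$-coefficient, and among such forms the pullback has order in $\{1,\frac{q+1}{2},q+1\}$; the top value $q+1$ is attained exactly when the $X$- and $Y$-coefficients vanish, that is, only by $H:Z=0$.

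Next I would treat $P=P_\alpha$ with $\alpha\in\mathbb{F}_q$, where $\varphi(P_\alpha)=(\alpha:0:1:\alpha^2)$. Taking $y$ as a local parameter gives $v_{P_\alpha}(x-\alpha)=\frac{q+1}{2}$. Dehomogenizing by $Z$ and recentering, a hyperplane through $\varphi(P_\alpha)$ pulls back to a combination of $x-\alpha$, $y$, $x^2-\alpha^2$, whose orders are $\frac{q+1}{2}$, $1$, $\frac{q+1}{2}$ (respectively $\frac{q+1}{2},1,q+1$ when $\alpha=0$). The generic order is $1$; the order rises to $q+1$ precisely when the $Y$-coefficient vanishes and the leading coefficients of the two order-$\frac{q+1}{2}$ terms cancel, which pins down the single hyperplane $H:W-2\alpha X+\alpha^2 Z=0$ (reducing to $W=0$ when $\alpha=0$). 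In every case $H$ has coefficients in $\mathbb{F}_q$, proving the $\mathbb{F}_q$-rationality claim; alternatively, since $PGL(2,\mathbb{F}_q)$ acts transitively on $\mathcal{C}(\mathbb{F}_q)$ through $\mathbb{F}_q$-linear automorphisms of $\mathbb{P}^3$ preserving $\varphi(X)$ (Facts~\ref{automorphisms}, \ref{automorphisms2}), rationality at one point propagates to all.

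The computation is routine; the only point requiring care is the cancellation step, where one must verify that the vanishing of the $Y$-coefficient together with the cancellation of the two degree-$\frac{q+1}{2}$ leading terms constitutes a single additional linear condition, so that the locus of forms with order $\ge q+1$ is one-dimensional and $H$ is genuinely unique. I should also note that $q\ge 5$ guarantees $1<\frac{q+1}{2}<q+1$, so the order sequence $(0,1,\frac{q+1}{2},q+1)$ consists of four distinct values and no intermediate order is missed.
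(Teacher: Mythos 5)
Your proposal is correct and takes essentially the same approach as the paper: the paper's proof merely exhibits the hyperplanes ($Z=0$ at $P_\infty$ and $W-2\alpha X+\alpha^2 Z=0$ at $P_\alpha$, visibly defined over $\mathbb{F}_q$), and your valuation computation --- the order sequence $(0,1,\frac{q+1}{2},q+1)$ read off from $1,\,y,\,x-\alpha,\,(x-\alpha)^2$ and their analogues at $P_\infty$, with uniqueness from the cancellation $a(x-\alpha)+d(x^2-\alpha^2)=d(x-\alpha)^2$ when $a=-2\alpha d$ --- is precisely the verification the paper leaves implicit. Nothing is missing.
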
 

\begin{proof}
For the point $P_\infty$, the hyperplane is defined by $Z=0$. 
For a point $P_\alpha$, the hyperplane is defined by $W-2\alpha X+\alpha^2 Z=0$. 
\end{proof}

\section{The ``only if'' part of the proof of Theorem} 
In this section, we assume that $\ell \subset \mathbb{P}^3$ is a Galois line for $\varphi(X)$. 

{\it Case: $\ell \not \subset \{Y=0\}$.} 
Let $Q$ be a point given by $\ell \cap \{Y=0\}$. 
For any point $R \in \mathcal{C}(\mathbb{F}_q)$ with $R \ne Q$, the number of points of $\overline{QR} \cap \varphi(X) \subset \mathcal{C}(\mathbb{F}_q)$ is at most two. 
If $R=Q$, then the fiber of $\pi_\ell \circ \varphi$ containing $R$ is unique. 
(We can ignore such a fiber below.)
Let $V_R$ be the plane spanned by $\ell$ and $R$. 
It follows from Fact \ref{Weierstrass points} that $G_\ell$ preserves $\mathcal{C}(\mathbb{F}_q)$, and that $\sigma(R) \in V_R$ for any $\sigma \in G_\ell$. 
By these facts and Fact \ref{Galois extension} (a), there exists a point $R \in \mathcal{C}(\mathbb{F}_q)$ such that the fiber of $\pi_\ell\circ\varphi$ containing $R$ coincides with the set $\overline{QR} \cap \mathcal{C}(\mathbb{F}_q)$ or with $\{R\}$.   
Since $X$ is not rational or hyperelliptic (see, for example, \cite[Lemma 12.2]{hkt}), it follows from Fact \ref{Galois extension} (c) that tangent line $T_R\varphi(X)$ at $R$ is contained in the plane $V_R$. 
We can take two points $R, R' \in \mathcal{C}(\mathbb{F}_q)$ such that $V_R \cap V_{R'}=\ell$. 
Since $(0:1:0:0) \in T_R\varphi(X) \cap T_{R'}\varphi(X) \subset V_R \cap V_{R'}$, it follows that $(0:1:0:0) \in \ell$. 

Assume that $\ell$ is not in case (I), namely, $\ell \ni (0:1:0:0)$ and $\ell \cap \mathcal{C}=\emptyset$. 
We prove that $Q$ is an $\mathbb{F}_q$-rational point. 
Assume by contradiction that $Q$ is not an $\mathbb{F}_q$-rational point. 
There exist at least $q-1$ points $R \in \mathcal{C}(\mathbb{F}_q)$  such that $\overline{QR} \cap \mathcal{C}(\mathbb{F}_q)=\{R\}$. 
Let $V_R$ be the plane spanned by $\ell$ and $R \in \mathcal{C}(\mathbb{F}_q)$. 
Since $\sigma(\mathcal{C}(\mathbb{F}_q))=\mathcal{C}(\mathbb{F}_q)$ and $\sigma(R) \in V_R$ for any $\sigma \in G_\ell$, it follows from Fact \ref{Galois extension} (a) that $G_\ell$ fixes $R$. 
This implies that $G_\ell$ fixes $q-1 \ge 4$ points on $\mathcal{C}$, and that $G_\ell$ fixes all points of the plane defined by $Y=0$. 
It follows from Fact \ref{automorphisms2} that $|G_\ell| \le \frac{q+1}{2}$, and that $\ell \cap \varphi(X) \ne \emptyset$. 
Since $\ell \ni (0:1:0:0)$, it follows that $Q \in \mathcal{C}$. 
This is a contradiction. 
Therefore, $\ell$ is in case (II). 

{\it Case: $\ell \subset \{Y=0\}$.} 
Note that $\sigma |_{\{Y=0\}} \ne 1$ for any element $\sigma \in G_\ell \setminus \{1\}$, by Fact \ref{Galois extension} (b).  
If the set $\ell \cap \mathcal{C}(\mathbb{F}_q)$ contains two points, then $\ell$ is an $\mathbb{F}_q$-line. 
Assume that $\ell \cap \mathcal{C}(\mathbb{F}_q)$ consists of a unique point $R$ (maybe $\ell \cap \mathcal{C}=\{R, R'\}$ for some point $R' \not\in \mathcal{C}(\mathbb{F}_q)$), namely, $\ell \cap \varphi(X)=\{R\}$. 
Since $\ell \ne T_R\varphi(X)$, it follows that $\deg \pi_\ell=q$.   
Let $V_R$ be the plane spanned by $T_R\varphi(X)$ and $\ell$.  Then $\sigma(R) \in V_R$ for any $\sigma \in G_\ell$. 
Since $V_R \cap \mathcal{C}(\mathbb{F}_q)=\{R\}$, it follows that $G_\ell$ fixes $R$. 
It follows from Fact \ref{Galois extension} (b) that ${\rm ord}_RV_R=q+1$. 
From Lemma, $V_R$ is defined over $\mathbb{F}_q$. 
Therefore, the line $\ell=V_R \cap \{Y=0\}$ is an $\mathbb{F}_q$-line and $\ell \cap \mathcal{C}=\{R\}$. 
In this case, $\ell=T_R\mathcal{C}$. 

Assume that $\ell \cap \mathcal{C}=\{R, R'\}$ for some $R, R' \in \mathcal{C} \setminus \mathcal{C}(\mathbb{F}_q)$ with $R \ne R'$, namely, $\ell \cap \varphi(X)=\emptyset$. 
Then $G_\ell$ acts on $\ell$. 
This implies that there exists $\sigma \in G_\ell \setminus \{1\}$ such that $\sigma(R)=R$ and $\sigma(R')=R'$. 
Since $\sigma$ acts on the conic $\mathcal{C}$ defined over $\mathbb{F}_q$, it follows that $R, R'$ are $\mathbb{F}_{q^2}$-rational points with $R^q=R'$, where $R^q$ is the image of $R$ under the $q$-Frobenius map. 
This implies that $\ell=\overline{RR'}=\overline{RR^q}$ is an $\mathbb{F}_q$-line. 
Case (III) is obtained. 

Assume that $\ell \cap \mathcal{C}=\{R\} \not\subset \mathcal{C}(\mathbb{F}_q)$, namely, $\ell$ is the tangent line $T_R\mathcal{C}$ at $R$. 
Then $G_\ell$ acts on $\ell$ and $\mathcal{C}$. 
This implies that $G_\ell$ fixes $R$. 
Since $G_\ell$ acts on the conic $\mathcal{C}$ defined over $\mathbb{F}_q$ and does not fix any point of $\mathcal{C}(\mathbb{F}_q)$, it follows that $R \in \mathcal{C}(\mathbb{F}_{q^2})\setminus \mathcal{C}(\mathbb{F}_q)$.  
Then $\ell$ is in case (IV). 
 
\section{The ``if'' part of the proof of Theorem} 
We consider case (I). 
Let $\ell$ be a line with $\ell \cap \mathcal{C} \ne \emptyset$. 
Then $\ell$ is defined by $X=Z=0$ or $X-\alpha Z=W-\alpha^2 Z$ for some $\alpha \in k$. 
For the case $X=Z=0$, we have an extension $k(x, y)/k(x)$. 
For the case $X-\alpha Z=W-\alpha^2 Z=0$, we have an extension $k(x, y)/k(x+\alpha)$. 
In both cases, $\ell$ is a Galois line with $G_\ell \cong C_\frac{q+1}{2}$. 

We consider case (II). 
Let $Q$ be an $\mathbb{F}_q$-rational point defined by $\ell \cap \{Z=0\}$. 
Assume that there exist different points $R, R' \in \mathcal{C}(\mathbb{F}_q)$ such that $T_R\mathcal{C} \cap T_{R'}\mathcal{C}=\{Q\}$. 
We can assume that $T_R\mathcal{C}$ and $T_{R'}\mathcal{C}$ are defined by $Y=Z=0$ and $Y=W=0$ respectively, namely, $Q=(1:0:0:0)$ and $\ell$ is defined by $Z=W=0$. 
Then 
$\pi_\ell \circ \varphi=(1:x^2)$. 
Let $t=x^2$. 
Then 
$$ y^{q+1}=(x^q-x)^2=x^{2q}-2x^{q+1}+x^2=t^q-2t^\frac{q+1}{2}+t. $$
Since the rational map $(x, y) \mapsto (t, y)$ is separable and generically one to one (according to the equation $y^\frac{q+1}{2}=x^q-x$), it follows that $k(x, y)=k(y, t)$.  
These imply that the extension $k(x, y)/k(t)=k(y, t)/k(t)$ is Galois and $G_\ell \cong C_{q+1}$. 

Let $\alpha \in \mathbb{F}_{q^2} \setminus \mathbb{F}_q$ and let $R=(\alpha:0:1:\alpha^2)$. 
Then $R^q=(\alpha^q:0:1:\alpha^{2q})$, the point $Q$ given by $T_R\mathcal{C} \cap T_{R'}\mathcal{C}$ is given by 
$$ \left(\frac{\alpha+\alpha^q}{2}:0:1:\alpha^{q+1} \right). $$
We consider the line $\ell$ defined by 
$$ X-\frac{\alpha+\alpha^q}{2}Z=W-\alpha^{q+1} Z=0. $$
Then 
$$ \pi_\ell \circ \varphi=\left(1:\frac{x^2-\alpha^{q+1}}{x-\frac{\alpha+\alpha^q}{2}}\right). $$
Let $c \in k$ be such that 
$$ c^{\frac{q+1}{2}}=-(\alpha-\alpha^q)^2. $$
We consider the rational map 
$$ \sigma_c: C \dashrightarrow \mathbb{P}^2; \ (x, y) \mapsto \left( \frac{(\alpha+\alpha^q)x-2\alpha^{q+1}}{2x-(\alpha+\alpha^q)}, \ \frac{c y}{\{2 x-(\alpha+\alpha^q)\}^2}\right). $$
It can be confirmed that $\sigma_c(C) \subset C$, and that $\sigma_c$ fixes the function $\frac{x^2-\alpha^{q+1}}{x-\frac{\alpha+\alpha^q}{2}}$. 
Since the rational map $(x, y) \mapsto (x, \zeta y)$ with $\zeta^{\frac{q+1}{2}}=1$ acts on $C$ and fixes the function above, it follows that $\ell$ is a Galois line. 
We take $c_0=(\alpha-\alpha^q)^2$. 
It can be confirmed that $\sigma_{c_0}^2=1$. 
It is easily verified that $G_\ell$ is abelian. 
Therefore, $G_\ell \cong C_{\frac{q+1}{2}} \times C_2$. 

We consider case (III). 
Let $\ell$ be an $\mathbb{F}_q$-line such that $\ell=\overline{RR'}$ for some $R, R' \in \mathcal{C}(\mathbb{F}_q)$. 
We can assume that $\ell$ is defined by $X=Y=0$. 
For the projection form $\ell$, we have an extension $k(x, y)=k(y/x)$. 
We take $t=y/x$. 
Then we have the relation
$$ f(x, t):=x^{q-1}-t^{\frac{q+1}{2}}x^\frac{q-1}{2}-1=0. $$
Let $\zeta$ be primitive $\frac{q-1}{2}$-th root of unity, and let $\xi^\frac{q-1}{2}=-1$. 
Then $f(\zeta x, t)=0$ and $f(\xi/x, t)=0$. 
Since all roots of $f(x, t) \in k(t)[x]$ are contained in $k(x, t)$, it follows that $\ell$ is a Galois line. 
It is easily confirmed that $G_\ell \cong D_{q-1}$.  

Let $\ell=T_R\mathcal{C}$ for some $R \in \mathcal{C}(\mathbb{F}_q)$. 
We can assume that $R=(0:0:1:0)$ and $\ell=T_R\mathcal{C}$ is defined by $Y=W=0$. 
Then we have an extension $k(x, y)/k(y)$ and the relation $x^q-x-y^\frac{q+1}{2}=0$. 
This extension is obviously Galois and $G_\ell \cong \mathbb{F}_q$. 

Let $\alpha \in \mathbb{F}_{q^2} \setminus \mathbb{F}_q$ and let $R=(\alpha:0:1:\alpha^2)$. 
Then $R^q=(\alpha^q:0:1:\alpha^{2q})$, and $\ell=\overline{RR^q}$ is defined by 
$$ Y=W-(\alpha^q+\alpha)X+\alpha^{q+1}Z=0.  $$
Then
$$ \pi_\ell \circ \varphi=\left( \frac{x^2-(\alpha^q+\alpha)x+\alpha^{q+1}}{y}:1\right)$$
Let $e^{\frac{q+1}{2}}=1$. 
We consider the rational map 
$$ \sigma: C \dashrightarrow \mathbb{P}^2; \ (x, y) \mapsto \left( \frac{(\alpha-e \alpha^q)x-(1-e)\alpha^{q+1}}{(1-e) x+(-\alpha^q+e \alpha)}, \ \frac{e (\alpha^q-\alpha)^2y}{((1-e) x+(-\alpha^q+e \alpha))^2}\right). $$
It can be confirmed that $\sigma(C) \subset C$, and that $\sigma$ fixes the function $\frac{x^2-(\alpha^q+\alpha)x+\alpha^{q+1}}{y}$. 
Let $f^{\frac{q+1}{2}}=-1$. 
We consider the rational map 
$$ \tau: C \dashrightarrow \mathbb{P}^2; \ (x, y) \mapsto \left( \frac{(\alpha^q-f \alpha)x+(-\alpha^{2q}+f \alpha^2)}{(1-f) x+(-\alpha^q+f \alpha)}, \ \frac{f (\alpha^q-\alpha)^2y}{((1-f) x+(-\alpha^q+f \alpha))^2}\right). $$
It can be confirmed that $\tau(C) \subset C$, and that $\tau$ fixes the function $\frac{x^2-(\alpha^q+\alpha)x+\alpha^{q+1}}{y}$. 
These imply that $\ell$ is a Galois line. 
For the automorphism $\sigma$, the image $\sigma((x:0:1:x^2))$ of any point $(x:0:1:x^2) \in \mathcal{C}$ is computed as 
\begin{eqnarray*} 
& & ((\alpha-e \alpha^q) x-(1-e)\alpha^{q+1})((1-e) x+(-\alpha^q+e \alpha)) :  0  : \\
& & \hspace{20mm} ((1-e) x+(-\alpha^q+e \alpha))^2:((\alpha-e \alpha^q) x-(1-e)\alpha^{q+1})^2).  
\end{eqnarray*}
For the automorphism $\tau$, the image $\tau((x:0:1:x^2))$ of any point $(x:0:1:x^2) \in \mathcal{C}$ is computed as 
\begin{eqnarray*} 
& & ((\alpha^q-f \alpha) x+(-\alpha^{2q}+f\alpha^2))((1-f) x+(-\alpha^q+f \alpha)) :  0  : \\
& & \hspace{20mm} ((1-f) x+(-\alpha^q+f \alpha))^2:((\alpha^q-f \alpha) x+(-\alpha^{2q}+f \alpha^2))^2).  
\end{eqnarray*}
Since $\sigma(R)=R$, $\sigma(R^q)=R^q$, $\tau(R)=R^q$ and $\tau(R^q)=R$, it follows that $G_\ell \cong D_{q+1}$. 

We consider case (IV). 
Let $\alpha \in \mathbb{F}_{q^2}\setminus \mathbb{F}_q$, and let $R=(\alpha:0:1:\alpha^2)$. 
Then the tangent line $T_R\mathcal{C}$ is defined by 
$$ Y=-2\alpha X+\alpha^2 Z+W=0. $$
Assume that $\ell=T_R\mathcal{C}$. 
Then 
$$ \pi_\ell \circ \varphi=\left(\frac{(x-\alpha)^2}{y}:1\right). $$
Let $e^{q+1}=1$. 
We consider the rational map 
$$ \sigma: C \dashrightarrow \mathbb{P}^2; \ (x, y) \mapsto \left( \frac{(\alpha-e \alpha^q)x-(1-e)\alpha^{q+1}}{(1-e) x+(-\alpha^q+e \alpha)}, \ \frac{e^2 (\alpha^q-\alpha)^2y}{((1-e) x+(-\alpha^q+e \alpha))^2}\right). $$
It can be confirmed that $\sigma(C) \subset C$, and that $\sigma$ fixes the function $\frac{(x-\alpha)^2}{y}$. 
For the automorphism $\sigma$,
the image $\sigma((x:0:1:x^2))$ of any point $(x:0:1:x^2) \in \mathcal{C}$ is computed as 
\begin{eqnarray*} 
& & ((\alpha-e \alpha^q) x-(1-e)\alpha^{q+1})((1-e) x+(-\alpha^q+e \alpha)) :  0  : \\
& & \hspace{20mm} ((1-e) x+(-\alpha^q+e \alpha))^2:((\alpha-e \alpha^q) x-(1-e)\alpha^{q+1})^2).  
\end{eqnarray*}
Since $\sigma$ fixes $R$ and $R^q$, it follows that $\ell$ is a Galois line with $G_\ell \cong C_{q+1}$. 

\section{Plane models admitting three or more Galois points}
We consider the projection $\pi_R$ from a point $R=(1:0:0:0) \in \mathbb{P}^3$.
Then
$$ \pi_R \circ \varphi (x, y)=(y:1:x^2). $$
As we saw the proof of the ``if'' part of Theorem, case (II), it follows that the rational map $\pi_R \circ \varphi$ is birational onto its image. 
Let $t=x^2$. 
Then 
$$ y^{q+1}=t^q-2t^\frac{q+1}{2}+t, $$
and the curve $(\pi_R \circ \varphi)(X)$ is defined by this equation. 
According to Theorem, there exist exactly $q+2$ Galois lines $\ell$ with $\ell \ni R$. 
These lines correspond to all Galois points for $(\pi_R \circ \varphi)(X)$. 
Since 
$$ t^q-2t^{\frac{q+1}{2}}+t=t(t^{\frac{q-1}{2}}-1)^2, $$
the set of all singular points coincides with the set $\{(\xi:0:1) \ | \ \xi^{\frac{q-1}{2}}=1\}$. 
For any element $\gamma \in \mathbb{F}_q \setminus \{0\}$, $\gamma^{\frac{q+1}{2}}=1$ or $\gamma^{\frac{q+1}{2}}=-1$.  
The proof of Corollary \ref{non-collinear outer Galois points} is completed. 

We consider the projection 
$$ (x:y:1:x^2) \mapsto (x:y:1) $$
from a point $(0:0:0:1) \in \mathbb{P}^3$. 
This is obviously birational, and the image coincides with the plane model $C$.  
According to Theorem, there exist exactly $q+2$ Galois lines $\ell$ with $\ell \ni (0:0:0:1)$. 
These lines correspond to all Galois points for $C$. 
The proof of Corollary \ref{collinear inner Galois points} is completed.  

Let $\alpha, \beta \in \mathbb{F}_{q^2} \setminus \mathbb{F}_q$ with $\alpha \ne \beta$, and let $R=(\alpha: 0: 1: \alpha^2)$, $R'=(\beta: 0 :1: \beta^2)$. 
Then the point $Q$ given by $T_R\mathcal{C} \cap T_{R'}\mathcal{C}$ is represented by 
$$\left(\frac{\alpha+\beta}{2}:0:1:\alpha \beta\right). $$
Then lines $T_R\mathcal{C}$ and $T_{R'}\mathcal{C}$ are Galois lines such that the Galois groups are cyclic groups of order $q+1$. 
Assume that $\beta \ne \alpha^q$. 
Since $Q$ is an $\mathbb{F}_{q^2}$-rational point but not an $\mathbb{F}_q$-rational point, there exists a unique line $\ell \ni Q$ defined over $\mathbb{F}_q$. 
According to Theorem, $\ell$ is a Galois line. 
As we saw in the proof of ``if'' part of Theorem, case (IV), it follows that  $G_{T_R\mathcal{C}}$ fixes exactly two points $(\alpha:0:1:\alpha^2)$ and $(\alpha^q:0:1:\alpha^{2q})$ on the conic $\mathcal{C}$. 
This implies that $G_{T_R\mathcal{C}} \cap G_{T_{R'}\mathcal{C}}=\{1\}$, and that the projection from $Q$ is birational onto its image.  
Therefore, the plane model obtained as the image of the projection from $Q$ possesses exactly three Galois points. 

Let $Q'=(1:0:0:0)$. 
The line $\overline{QQ'}$ is defined by $W-\alpha\beta Z=0$. 
We take $\gamma \in \mathbb{F}_q$ such that $\gamma \ne \delta^2$ for any $\delta \in \mathbb{F}_q$, and $\beta=\gamma/\alpha$. 
Then $\pm \sqrt{\gamma} \in \mathbb{F}_{q^2} \setminus \mathbb{F}_q$, and two points $(\pm \sqrt{\gamma}:0:1:\gamma)$ are contained in an $\mathbb{F}_q$-line $\overline{QQ'}$. 
According to Theorem, the line $\overline{QQ'}$ is a Galois line with $G_{\overline{QQ'}} \cong D_{q+1}$. 
Considering the projection from $Q$, we have Corollary \ref{three outer Galois points}. 

If we take $\alpha^{q+1} \ne 1$ and $\beta=1/\alpha$, then $\mathcal{C} \cap \overline{QQ'}$ consists of two points $(1:0:1:1), (-1:0:1:1) \in \mathcal{C}(\mathbb{F}_q)$.  
We have the following. 

\begin{proposition} 
There exists a plane model $C'' \subset \mathbb{P}^2$ of degree $q+1$ admitting exactly three Galois points $P_1, P_2, P_3$ such that they are collinear Galois points, $G_{P_1} \cong G_{P_2} \cong C_{q+1}$ and $G_{P_3} \cong D_{q-1}$.  
\end{proposition}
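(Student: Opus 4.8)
The plan is to reuse the construction set up immediately before the statement and to run the argument of Corollary \ref{three outer Galois points} with the single change $\beta=1/\alpha$. Fix $\alpha\in\mathbb{F}_{q^2}\setminus\mathbb{F}_q$ with $\alpha^{q+1}\ne1$ and put $\beta=1/\alpha$; then $\beta\in\mathbb{F}_{q^2}\setminus\mathbb{F}_q$, $\alpha\ne\beta$ (since $\alpha^2\ne1$), and $\beta\ne\alpha^q$ (since $\alpha^{q+1}\ne1$). With $R=(\alpha:0:1:\alpha^2)$, $R'=(1/\alpha:0:1:1/\alpha^2)$, $Q=T_R\mathcal{C}\cap T_{R'}\mathcal{C}=\left(\frac{\alpha+1/\alpha}{2}:0:1:1\right)$ and $Q'=(1:0:0:0)$, I would take $C''$ to be the image of $\varphi(X)$ under the projection $\pi_Q$ from $Q$, and exhibit $P_1=\pi_Q(T_R\mathcal{C})$, $P_2=\pi_Q(T_{R'}\mathcal{C})$, $P_3=\pi_Q(\overline{QQ'})$ as its Galois points. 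The decisive difference from Corollary \ref{three outer Galois points} is that, since $\alpha\beta=1$, the line $\overline{QQ'}$ (given by $Y=W-Z=0$) meets $\mathcal{C}$ in the two $\mathbb{F}_q$-rational points $(\pm1:0:1:1)$, so case (III)(d) of Theorem applies and gives $G_{\overline{QQ'}}\cong D_{q-1}$, in place of the $D_{q+1}$ of case (III)(f).

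First I would record that $Q$ is $\mathbb{F}_{q^2}$-rational, that $Q\notin\mathcal{C}$ (because $\left(\frac{\alpha+1/\alpha}{2}\right)^2-1=\frac{(\alpha^2-1)^2}{4\alpha^2}\ne0$), and that $Q$ is not $\mathbb{F}_q$-rational (the equality $\frac{\alpha+1/\alpha}{2}=\frac{\alpha^q+1/\alpha^q}{2}$ would force $\alpha^{q+1}=1$). Then I would enumerate all Galois lines through $Q$ via the classification in Theorem. A line of type (I) or (II) passes through $(0:1:0:0)$, so if it also passes through $Q$ it must be $\overline{(0:1:0:0)Q}$; this line meets $\{Y=0\}$ only in $Q\notin\mathcal{C}$, hence is not incident to $\mathcal{C}$ (ruling out type (I)), and it is not defined over $\mathbb{F}_q$, since otherwise it would contain $Q^q$, making $(0:1:0:0),Q,Q^q$ collinear, which is impossible because $Q,Q^q\in\{Y=0\}$ while $(0:1:0:0)\notin\{Y=0\}$ (ruling out type (II)). Inside $\{Y=0\}$ the non-$\mathbb{F}_q$ point $Q$ lies on a unique $\mathbb{F}_q$-line, namely $\overline{QQ'}$, giving the single type-(III) line. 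Finally, the type-(IV) lines through $Q$ are the tangents to $\mathcal{C}$ from the exterior point $Q$, of which there are exactly two; by construction they are $T_R\mathcal{C}$ and $T_{R'}\mathcal{C}$, tangent at $R,R'\in\mathcal{C}(\mathbb{F}_{q^2})\setminus\mathcal{C}(\mathbb{F}_q)$. Thus there are exactly three Galois lines through $Q$, with groups $C_{q+1}$, $C_{q+1}$ and $D_{q-1}$.

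To pass to $C''$ I need $\pi_Q$ to be birational onto its image. As in case (IV), each of $G_{T_R\mathcal{C}}\cong C_{q+1}$ and $G_{T_{R'}\mathcal{C}}\cong C_{q+1}$ acts faithfully on $\mathcal{C}$, the first fixing exactly $\{R,R^q\}$ and the second exactly $\{R',(R')^q\}$. Since $\alpha^{q+1}\ne1$ and $\alpha^2\ne1$, the four points $R,R^q,R',(R')^q$ are pairwise distinct on $\mathcal{C}\cong\mathbb{P}^1$, so a nontrivial common element would fix four points of $\mathbb{P}^1$, which is impossible; hence $G_{T_R\mathcal{C}}\cap G_{T_{R'}\mathcal{C}}=\{1\}$. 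Now $\pi_Q^*k(C'')$ is an intermediate field of both Galois extensions $k(X)/\pi_{T_R\mathcal{C}}^*k(\mathbb{P}^1)$ and $k(X)/\pi_{T_{R'}\mathcal{C}}^*k(\mathbb{P}^1)$, so ${\rm Gal}(k(X)/\pi_Q^*k(C''))$ is a subgroup of both $G_{T_R\mathcal{C}}$ and $G_{T_{R'}\mathcal{C}}$, hence of their trivial intersection; therefore $\deg\pi_Q=[k(X):\pi_Q^*k(C'')]=1$, i.e. $\pi_Q$ is birational.

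Finally, birationality of $\pi_Q$ identifies $k(X)$ with $k(C'')$ in such a way that $\pi_{\pi_Q(\ell)}\circ\pi_Q=\pi_\ell$ for every line $\ell\ni Q$, so the Galois points of $C''$ correspond bijectively, with isomorphic Galois groups, to the Galois lines of $\varphi(X)$ through $Q$. Hence $C''$ has exactly the three Galois points $P_1,P_2,P_3$, with $G_{P_1}\cong G_{P_2}\cong C_{q+1}$ and $G_{P_3}\cong D_{q-1}$; they are collinear because the three lines $T_R\mathcal{C}$, $T_{R'}\mathcal{C}$, $\overline{QQ'}$ all lie in the plane $\{Y=0\}$, which contains the center $Q$ and therefore projects to a single line of $\mathbb{P}^2$. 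Since $Q\notin\varphi(X)$ and $\deg\varphi(X)=q+1$, the model $C''$ has degree $q+1$. I expect the main obstacle to be securing the exact count of Galois points --- ruling out all lines of types (I) and (II) and any further lines of types (III)/(IV) through $Q$ --- together with the birationality of $\pi_Q$; once these are in hand, the group identifications are precisely those of cases (III)(d) and (IV) of Theorem.
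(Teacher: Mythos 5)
Your proposal is correct and takes essentially the same route as the paper: the paper proves the Proposition by specializing the construction preceding it to $\beta=1/\alpha$ with $\alpha^{q+1}\ne 1$, so that the unique $\mathbb{F}_q$-line through $Q$ meets $\mathcal{C}$ in the two $\mathbb{F}_q$-rational points $(\pm 1:0:1:1)$ and case (III)(d) of Theorem gives $D_{q-1}$, while the two tangents $T_R\mathcal{C}$, $T_{R'}\mathcal{C}$ give $C_{q+1}$ and the trivial intersection $G_{T_R\mathcal{C}}\cap G_{T_{R'}\mathcal{C}}=\{1\}$ yields birationality of the projection from $Q$, exactly as you argue. You merely make explicit several steps the paper leaves implicit (the enumeration ruling out further Galois lines through $Q$, the function-field argument for birationality, collinearity via the plane $\{Y=0\}$, and the degree count), all of which check out.
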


\begin{center}
{\bf Acknowledgments} 
\end{center} 
The author is grateful to Doctor Kazuki Higashine for helpful discussions.


\begin{thebibliography}{100} 
\bibitem{duyaguit-yoshihara} C. Duyaguit and H. Yoshihara, Galois lines for normal elliptic space curves, Algebra Colloq. {\bf 12} (2005), 205--212.

\bibitem{fgt} R. Fuhrmann, A. Garcia and F. Torres, On maximal curves, J. Number Theory {\bf 67} (1997), 29--51. 

\bibitem{fukasawa1} S. Fukasawa, Galois points for a plane curve in arbitrary characteristic, Proceedings of the IV Iberoamerican Conference on Complex Geometry, Geom. Dedicata {\bf 139} (2009), 211--218. 


\bibitem{fukasawa2} S. Fukasawa, Galois lines for the Artin--Schreier--Mumford curve, Finite Fields Appl. {\bf 75} (2021), 101894, 10 pages. 

\bibitem{fukasawa3} S. Fukasawa, Automorphism group, Galois points and lines of the generalized Artin--Schreier--Mumford curve, Geom. Dedicata {\bf 216} (2022), 19, 9 pages.    
\bibitem{fukasawa-higashine} S. Fukasawa and K. Higashine, Galois lines for the Giulietti--Korchm\'{a}ros curve, Finite Fields Appl. {\bf 57} (2019), 268--275. 
\bibitem{higashine} K. Higashine, Private communications, March 2022. 
\bibitem{hkt} J. W. P. Hirschfeld, G. Korchm\'{a}ros and F. Torres, {\it Algebraic curves over a finite field}, Princeton Univ. Press, Princeton, 2008.  
\bibitem{miura-yoshihara} K. Miura and H. Yoshihara, Field theory for function fields of plane quartic curves, J. Algebra {\bf 226} (2000), 283--294.
\bibitem{stichtenoth1} H. Stichtenoth, \"{U}ber die Automorphismengruppe eines algebraischen Funktionenk\"{o}rpers von Primzahlcharakteristik. II. Ein spezieller Typ von Funktionenk\"{o}rpern, Arch. Math. {\bf 24} (1973), 615--631.  
\bibitem{stichtenoth2} H. Stichtenoth, Algebraic Function Fields and Codes, Universitext, Springer-Verlag, Berlin (1993).
\bibitem{yoshihara} H. Yoshihara, Function field theory of plane curves by dual curves, J. Algebra {\bf 239} (2001), 340--355.
\bibitem{yoshihara2} H. Yoshihara, Galois lines for space curves, Algebra Colloq. {\bf 13} (2006), 455--469.  
\bibitem{yoshihara-fukasawa} H. Yoshihara and S. Fukasawa, List of problems, available at: \\
https://sites.google.com/sci.kj.yamagata-u.ac.jp/fukasawa-lab/open-questions-english
\end{thebibliography}
\end{document}